\newtheorem{claim}{Claim}
\newtheorem{theorem}{Theorem}
\newtheorem{lemma}[theorem]{Lemma}
\newtheorem{conjecture}[theorem]{Conjecture}
\newtheorem{corollary}[theorem]{Corollary}
\newcommand{\A}{\mathcal{A}}
\newcommand{\M}{\mathcal{M}}
\newcommand{\C}{\mathcal{C}}
\newcommand{\G}{\mathcal{G}}
\newcommand{\Bin}{\textrm{Bin}}
\newcommand{\todo}[1]{\textcolor{red}{#1}}
\newcommand{\abs}[1]{\left|{#1}\right|}
\newcommand{\ceil}[1]{\left\lceil{#1}\right\rceil}
\newcommand{\numberthis}{\addtocounter{equation}{1}\tag{\theequation}}
\begin{document}
\title{Intersections of iterated shadows}
\author{Hou Tin Chau\footnote{School of Mathematics, University of Bristol. Research supported by an EPSRC Doctoral Training Studentship.}, David Ellis\footnote{School of Mathematics, University of Bristol.}\hspace{3pt} and Marius Tiba\footnote{Department of Mathematics, King's College London.}}
\date{September 2024}
\maketitle
\begin{abstract}
We show that if $\mathcal{A} \subset {[n] \choose n/2}$ with measure bounded away from zero and from one, then the $\Omega(\sqrt{n})$-iterated upper shadows of $\mathcal{A}$ and $\mathcal{A}^c$ intersect in a set of positive measure. This confirms (in a strong form) a conjecture of Friedgut. It can be seen as a stability result for the Kruskal--Katona theorem.
\end{abstract}

\section{Introduction}
For $n \in \mathbb{N}$, we write $[n]: = \{1,2,\ldots,n\}$ for the standard $n$-element set; the power-set $\mathcal{P}([n])$ is known as the {\em $n$-dimensional discrete cube}, or {\em $n$-cube} for short. For $1 \leq k \leq n$, we write ${[n] \choose k}: = \{A \subset [n]:\ |A|=k\}$ for the set of all $k$-element subsets of $[n]$, known as the {\em $k$th layer} of the $n$-cube. If $1 \leq k \leq n$ and $\mathcal{A} \subset {[n] \choose k}$, we write 
$$\partial^{+}(\mathcal{A}): = \left\{B \in {[n] \choose k+1}:\ B \supset A \text{ for some } A \in \mathcal{A}\right\},$$
for the upper shadow of $\mathcal{A}$, and we write
$$\partial^{+r}(\mathcal{A}) = \left\{B \in {[n] \choose k+r}:\ B \supset A \text{ for some } A \in \mathcal{A}\right\}$$
for its $r$th iterate, for any $r \leq n-k$.

Now let $n$ be even, and let $\mathcal{A} \subset {[n] \choose n/2}$. The Kruskal--Katona theorem \cite{kruskal,katona} supplies a sharp bound on how small $|\partial^{+}(\mathcal{A})|$ can be, in terms of $|\mathcal{A}|$; equality holds if $\mathcal{A}$ is an initial segment of the lexicographical ordering on ${[n] \choose n/2}$. Since the upper shadow of an initial segment of the lexicographic ordering on ${[n] \choose k}$ is an initial segment of the lexicographic ordering on ${[n] \choose k+1}$ (for any $k$), applying the Kruskal--Katona theorem iteratively yields a sharp bound on how small $|\partial^{+r}(\mathcal{A})|$ can be, in terms of $|\mathcal{A}|$. Letting $\mu$ denote the uniform measure on the appropriate layer of the $n$-cube, i.e.
$$\mu(\mathcal{F}): = |\mathcal{F}|/{n \choose k}$$
if $\mathcal{F} \subset {[n] \choose k}$, the Kruskal--Katona theorem implies that if $\mathcal{A} \subset {[n] \choose k}$ with $\mu(\mathcal{A}) = 1/2$, then $\mu(\partial^{+r}(\mathcal{A})) \geq 1/2+r/n$; equality holds if $\mathcal{A} = \{A \in {[n] \choose n/2}:\ 1 \in A\}$. Interestingly however, if $\mathcal{A} = \{A \in {[n] \choose n/2}:\ 1 \in A\}$, then $\mathcal{A}^c = \{A \in {[n] \choose n/2}:\ 1 \notin A\}$ and $\mu(\partial^{+}(\mathcal{A}^c)) = 1$, i.e., one only needs to take the upper shadow of $\mathcal{A}^c$ (once, without iterating) to obtain everything. In general, it seems impossible to come up with examples of half-sized sets $\mathcal{A} \subset {[n] \choose n/2}$ where $\partial^{+r}(\mathcal{A})$ and $\partial^{+r}(\mathcal{A}^c)$ do not have intersection of positive measure, if $r$ is at least (any) linear function of $\sqrt{n}$. Considering the example $\mathcal{A} = \{A \in {[n] \choose n/2}:\ |A \cap [n/2]| > n/4\}$, where $n$ is congruent to two modulo four, shows that we may have $\mu(\mathcal{A})=1/2$ and 
$$\mu(\partial^{+r}(\mathcal{A})\cap\partial^{+r}(\mathcal{A}^c)) = O(r/\sqrt{n}),$$
so the hypothesis that $r$ grows at least linearly in $\sqrt{n}$ is necessary to obtain the conclusion above (that $\partial^{+r}(\mathcal{A})$ and $\partial^{+r}(\mathcal{A}^c)$ intersect in a set of positive measure). Friedgut (personal communication) conjectured that $r = \Omega(\sqrt{n})$ is sufficient; to be precise, he made the following conjecture.
\begin{conjecture}[Friedgut, 2024, personal communication]
For any $\zeta >0$ and $\epsilon >0$, there exists $\delta >0$ such that the following holds. If $n$ is even and $\mathcal{A} \subset {[n] \choose n/2}$ with $\zeta \leq \mu(\mathcal{A})\leq 1-\zeta$ then, for $r = \lceil \epsilon \sqrt{n} \rceil$, we have
    $$\mu(\partial^{+r}(\mathcal{A})\cap\partial^{+r}(\mathcal{A}^c)) \geq \delta.$$
\end{conjecture}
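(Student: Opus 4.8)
\emph{Proof plan.}
First I would pass to the hypercube. Call a set $S\subseteq[n]$ with $|S|\geq n/2$ \emph{monochromatic} if ${S\choose n/2}\subseteq\A$ or ${S\choose n/2}\subseteq\A^c$ (complement taken inside ${[n]\choose n/2}$), and let $\mathcal D\subseteq\mathcal P([n])$ be the family of all $S$ with $|S|\geq n/2$ that are not monochromatic. Then $\mathcal D$ is an up-set, and a set $B\in{[n]\choose n/2+r}$ lies in $\partial^{+r}(\A)\cap\partial^{+r}(\A^c)$ exactly when $B\in\mathcal D$; indeed ${[n]\choose n/2+r}\setminus\mathcal D$ is the disjoint union of the complement of $\partial^{+r}(\A)$ in ${[n]\choose n/2+r}$ and the complement of $\partial^{+r}(\A^c)$. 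So it suffices to show $\mu\big(\mathcal D\cap{[n]\choose n/2+r}\big)\geq\delta$. It is convenient to phrase this probabilistically: revealing a uniformly random maximal chain $C_0\subset C_1\subset\cdots\subset C_n$ from the top downwards and writing $M_j:=\mu\big(\A\cap{C_{n/2+j}\choose n/2}\big)$, the sequence $(M_j)_{j=0}^{r}$ is a $[0,1]$-valued martingale with $\mathbb E[M_j]=\mu(\A)$ for every $j$ and $M_0=\mathbf 1_\A(C_{n/2})\in\{0,1\}$; a short computation (writing $M_{j+1}$ as an explicit convex combination of $M_j$ and one further density relating consecutive levels) then gives $\{M_r\in\{0,1\}\}=\{M_j=M_0\text{ for all }j\leq r\}$. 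So we must show that, with probability at least $\delta$, the random chain leaves $\{0,1\}$ within $r$ steps.

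At the finest scale this already carries content. Because $\zeta\leq\mu(\A)\leq1-\zeta$ and the Johnson graph on ${[n]\choose n/2}$ has normalised spectral gap $\Theta(1/n)$, the expander-mixing (or Cheeger) inequality shows that an $\Omega(\zeta/n)$-fraction of its edges run between $\A$ and $\A^c$. Each such edge $\{A,A'\}$ has $|A\triangle A'|=2$, so $A\cup A'\in{[n]\choose n/2+1}$ is non-monochromatic, and since a given $(n/2+1)$-set hosts at most ${n/2+1\choose2}$ pairs of $n/2$-subsets, we obtain $\mu\big(\mathcal D\cap{[n]\choose n/2+1}\big)\geq c_1\zeta/n$ for some absolute $c_1>0$.

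The core of the proof is then to propagate this bound from level $n/2+1$ up to level $n/2+r$, amplifying the density from $\Omega(1/n)$ to a positive constant $\delta=\delta(\zeta,\epsilon)$ across the $r=\lceil\epsilon\sqrt n\rceil$ steps. The naive estimate is hopeless: since $\partial^{+}\big(\mathcal D\cap{[n]\choose k}\big)\subseteq\mathcal D\cap{[n]\choose k+1}$, the Kruskal--Katona theorem multiplies the density by only a factor $1+O\big(n^{-1}\log(1/\text{density})\big)$ per step, so over $r$ steps the gain is merely $1+o(1)$ --- precisely because the Kruskal--Katona extremal families (initial segments of the lexicographic order) have the slowest-growing shadows, a genuine \emph{stability} input is essential. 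I would run the propagation through a dichotomy at each dyadic scale: either the current layer of $\mathcal D$ is sufficiently ``spread'' that a quantitative Kruskal--Katona stability estimate --- spread families have rapidly-growing iterated shadows, provable by hypercontractivity on the slice --- forces its $(r-1)$-fold upper shadow to have density $\Omega(1)$; or else $\A$ is $o(1)$-close to a family determined by a bounded number of coordinates, namely a ``dictatorship'' $\{A:i\in A\}$ or, more generally, a Hamming ball $\{A:|A\cap T|\geq\theta\}$ on a bounded set $T$, in which case the conclusion is checked by hand: for such $\A$ the density of non-monochromatic $(n/2+r)$-sets is evaluated by a central limit theorem and comes out $\Theta(\epsilon)$, matching the example $\{A:|A\cap[n/2]|>n/4\}$ of the introduction, for which $\mu(\partial^{+r}(\A)\cap\partial^{+r}(\A^c))=\Theta(r/\sqrt n)$. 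I expect the main difficulty to be exactly this step --- establishing ``spread $\Rightarrow$ rapid iterated-shadow growth'' with the right dependence on the number of steps, and arranging the reduction to the junta/Hamming-ball case (say by compressions, or by a scale-by-scale junta-approximation argument) so that errors accrued over $\Theta(\sqrt n)$ steps stay controlled. The threshold $r=\Theta(\sqrt n)$ is forced and should organise the bookkeeping: $\sqrt n$ is the correlation length at which one transposition on ${[n]\choose n/2}$ becomes comparable to the $\Theta(\sqrt n)$-width of a Hamming ball, hence the number of steps after which $(M_j)$ can no longer stay pinned at an endpoint unless $\A$ is itself essentially a Hamming ball.
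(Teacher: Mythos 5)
Your reduction to the up-set $\mathcal{D}$ of non-monochromatic sets, the martingale reformulation, and the $\Omega(\zeta/n)$ density bound at level $n/2+1$ are all fine, but the argument stops exactly where the theorem begins: the amplification from density $\Omega(1/n)$ at level $n/2+1$ to density $\delta(\zeta,\epsilon)$ at level $n/2+\lceil\epsilon\sqrt{n}\rceil$ is not carried out. The proposed dichotomy (``spread $\Rightarrow$ rapidly growing iterated upper shadows, via hypercontractivity on the slice'' versus ``otherwise $\A$ is close to a bounded junta or Hamming ball, checked by a CLT'') is only named, not proved: no quantitative Kruskal--Katona stability statement is formulated, no argument is given that the structured alternative really is a bounded junta or Hamming ball (rather than, say, a family weakly correlated with many coordinates at scale $1/\sqrt{n}$), and no mechanism is offered for keeping the approximation errors from swallowing the $\Omega(1/n)$ starting density over $\Theta(\sqrt{n})$ steps. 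Each of these is a research-level task in its own right, as you yourself acknowledge (``I expect the main difficulty to be exactly this step''), so what you have is a plausible programme, not a proof of the conjecture.

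For comparison, the paper avoids any spread/structure dichotomy by never relying on the distance-$1$ Johnson graph, whose spectral gap $\Theta(1/n)$ is precisely what forces you into the amplification problem. Instead it works with $J(n,n/2,j)$ for $j=\Theta(\eta n)$ (with $\eta$ a small constant depending on $\epsilon$ and $\mu(\A)$), shows its normalised spectral gap is at least $\eta/2$ (Theorem \ref{thm:spect}, via Koshelev's trick), and deduces from Cheeger that a proportion at least $\tfrac{1}{2}\eta\,\mu(\A)(1-\mu(\A))$ of distance-$(2j)$ pairs have one endpoint in $\A$ and one in $\A^c$. It then takes a uniformly random subcube $\C$ of dimension $D=2j$ whose middle layer lies in ${[n] \choose n/2}$; within $\C$, the local LYM inequality and Harris' inequality show that the up-sets generated by $\A\cap\C$ and $\A^c\cap\C$ intersect in measure at least $\tfrac{1}{8}\gamma_\C$, where $\gamma_\C$ is the proportion of good distance-$(2j)$ pairs in the middle layer of $\C$ and $\mathbb{E}_\C[\gamma_\C]$ equals the global good-pair proportion; a Chernoff bound then locates most of this intersection within $\lceil K\sqrt{D}\rceil=\lceil\epsilon\sqrt{n}\rceil$ levels of the middle, and averaging over $\C$ finishes the proof with no iterated-shadow stability input at all. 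If you want to rescue your approach, the lesson is to secure a constant-order expansion input at the outset (long-range pairs at distance $\Theta(\eta n)$) rather than a $1/n$-order input that must subsequently be amplified.
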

Our purpose in this note is to prove Friedgut's conjecture, in the following quantitative form.

\begin{theorem}
\label{thm:main}
    There exists an absolute constant $C>0$ such that the following holds. Let $\epsilon \in (0,1)$, let $n$ be even and let $\mathcal{A} \subset {[n] \choose n/2}$ with $\A \neq \emptyset$ and $A \neq {[n] \choose n/2}$. Suppose further that
    $$n \geq \frac{C}{\epsilon^2}\left(\log(1/\epsilon)+\log\left(\frac{1}{\mu(\A)(1-\mu(\A))}\right)\right).$$
    Then, for $r = \lceil \epsilon \sqrt{n} \rceil$, we have
    $$\mu(\partial^{+r}(\mathcal{A})\cap\partial^{+r}(\mathcal{A}^c)) = \Omega\left(\frac{\epsilon^2}{\log \left(\frac{1}{\epsilon \mu(\A)(1-\mu(\A))}\right)}\right)\mu(\A)(1-\mu(\A)).$$
\end{theorem}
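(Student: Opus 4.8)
The plan is first to pass to the equivalent inequality
\[
\mu(\partial^{+r}(\mathcal{A})) + \mu(\partial^{+r}(\mathcal{A}^c)) \ \geq\ 1 + \delta ,
\]
with $\delta$ the quantity in the statement and $r = \lceil \epsilon \sqrt n \rceil$. This is equivalent because every $B \in \binom{[n]}{n/2+r}$ contains an $n/2$-subset, and that subset lies in $\mathcal{A}$ or in $\mathcal{A}^c$, so the two shadows cover the layer $\binom{[n]}{n/2+r}$ and hence $\mu(\partial^{+r}(\mathcal{A}) \cap \partial^{+r}(\mathcal{A}^c)) = \mu(\partial^{+r}(\mathcal{A})) + \mu(\partial^{+r}(\mathcal{A}^c)) - 1$. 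Writing $p(B)$ for the fraction of the $n/2$-subsets of $B$ that lie in $\mathcal{A}$, so that $\mathbb{E}_B[p(B)] = \mu(\mathcal{A}) =: \alpha$ when $B$ is uniform in $\binom{[n]}{n/2+r}$, the target becomes: the ``smoothed indicator'' $p$ takes a value in $(0,1)$ with probability at least $\delta$, i.e.\ $p$ does not concentrate on $\{0,1\}$. By the symmetry $\mathcal{A} \leftrightarrow \mathcal{A}^c$ we may assume $\alpha \le 1/2$.

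The first ingredient is a second-moment bound. Since $p(1-p) \le \mathbf{1}\{0 < p < 1\}$ pointwise,
\[
\mu(\partial^{+r}(\mathcal{A}) \cap \partial^{+r}(\mathcal{A}^c)) \ \geq\ \mathbb{E}_B\big[p(B)(1-p(B))\big] \ =\ \alpha(1-\alpha) - \Var_B(p(B)) .
\]
Now $p$ is the image of $\mathbf{1}_{\mathcal{A}}$ under the (suitably normalised) $r$-fold ``up'' operator of the Johnson scheme from layer $n/2$ to layer $n/2+r$. Decomposing $\mathbf{1}_{\mathcal{A}} - \alpha = \sum_{i \ge 1} f_i$ into its level-$i$ parts and using that the composition of the down-operator with the up-operator is a scalar on each level, one gets the identity
\[
\Var_B(p(B)) = \sum_{i \ge 1} \rho_i^{(r)} \|f_i\|_2^2 , \qquad \rho_i^{(r)} = \prod_{l=0}^{i-1} \frac{n/2 - r - l}{n/2 + r - l} \ \le\ \Big(\frac{n/2-r}{n/2+r}\Big)^{i},
\]
so that $\mathbb{E}_B[p(1-p)] = \sum_{i\ge 1}(1 - \rho_i^{(r)})\|f_i\|_2^2$. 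As $1 - \rho_i^{(r)}$ is increasing in $i$, this is at least $(1-\rho_d^{(r)})$ times the level-$(>d)$ weight of $\mathbf{1}_{\mathcal{A}}$, for any threshold $d$. Choosing $d$ of order $\epsilon\sqrt n / \log\!\big(1/(\epsilon\alpha(1-\alpha))\big)$ gives $1 - \rho_d^{(r)}$ of order $\epsilon^2 / \log\!\big(1/(\epsilon\alpha(1-\alpha))\big)$; hence the theorem follows whenever at least half of $\Var(\mathbf{1}_{\mathcal{A}}) = \alpha(1-\alpha)$ sits on levels above $d$, i.e.\ whenever $\mathcal{A}$ is not essentially ``low degree''.

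The remaining case, where almost all of the $\ell_2$-mass of $\mathbf{1}_{\mathcal{A}}$ is on levels $\le d$, is the heart of the argument, and the second moment is powerless here ($\Var_B(p)$ is then close to $\alpha(1-\alpha)$, and $p$ can genuinely be $L^2$-close to a $\{0,1\}$-valued function). The starting observation is that $p$ cannot be exactly $\{0,1\}$-valued: that would mean all $n/2$-subsets of every $B$ agree under $\mathbf{1}_{\mathcal{A}}$, forcing $\mathcal{A}$ to be a union of connected components of the Johnson graph on $\binom{[n]}{n/2}$, hence $\varnothing$ or everything. To make this quantitative, suppose $\mu(\partial^{+r}(\mathcal{A}) \cap \partial^{+r}(\mathcal{A}^c)) < \delta$. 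Then $p$ is $\delta$-close in $L^2$ to $\mathbf{1}_{\mathcal{G}}$, where $\mathcal{G} = \{B : p(B) = 1\}$, and $\mathcal{G}$ together with $\mathcal{H} = \{B : p(B) = 0\}$ covers all but a $\delta$-fraction of the layer. Since $p$, and therefore $\mathbf{1}_{\mathcal{G}}$ and $\mathbf{1}_{\mathcal{H}}$ up to a further $O(\sqrt\delta)$, has negligible weight above level $d$, hypercontractivity on the slice forces $\mathcal{G}$ and $\mathcal{H}$ to be structured; combining this with the containments $\{A \in \binom{[n]}{n/2} : A \subseteq B \text{ for some } B \in \mathcal{G}\} \subseteq \mathcal{A}$ and the analogous one for $\mathcal{H}$ and $\mathcal{A}^c$, and with the Kruskal--Katona theorem applied to these lower shadows, one should pin $\mathcal{A}$ down to something close to a lex-initial (``dictator-type'') family. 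For such families one checks directly that $\mu(\partial^{+r}(\mathcal{A}^c))$ is very close to $1$ --- recall that for $\mathcal{A} = \{A : 1 \in A\}$ already $\partial^{+1}(\mathcal{A}^c)$ is everything --- which contradicts $\mu(\partial^{+r}(\mathcal{A})) + \mu(\partial^{+r}(\mathcal{A}^c)) < 1 + \delta$. I expect the logarithmic factor, and the hypothesis relating $n$, $\epsilon$ and $\mu(\mathcal{A})(1 - \mu(\mathcal{A}))$, to enter precisely in controlling the errors of this stability step --- for instance through a hypergeometric large-deviation estimate that only becomes effective once $r \gtrsim \sqrt{\log(1/(\alpha(1-\alpha)))}$. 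The main obstacle is exactly this low-degree regime: turning ``$r$-locally almost constant and of low degree'' into ``close to a lex-initial family'' with the quantitative dependence the theorem demands.
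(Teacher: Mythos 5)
Your first ingredient (the identity $\mu(\partial^{+r}(\mathcal{A})\cap\partial^{+r}(\mathcal{A}^c))=\mu(\partial^{+r}(\mathcal{A}))+\mu(\partial^{+r}(\mathcal{A}^c))-1$, the smoothed indicator $p$, and the second-moment/level-decomposition bound $\mathbb{E}_B[p(1-p)]=\sum_{i\ge 1}(1-\rho_i^{(r)})\|f_i\|_2^2$) is a reasonable and essentially correct reduction, and with your choice of threshold $d\asymp \epsilon\sqrt n/\log\bigl(1/(\epsilon\mu(\A)(1-\mu(\A)))\bigr)$ it does dispose of the case where a constant fraction of $\Var(\mathbf{1}_{\mathcal{A}})$ lies above level $d$. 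But the proof stops there: the low-degree case, which you yourself identify as ``the heart of the argument,'' is only a hope, not an argument. The chain ``$p$ almost $\{0,1\}$-valued and concentrated on levels $\le d$ $\Rightarrow$ (by hypercontractivity on the slice plus Kruskal--Katona) $\mathcal{A}$ is close to a lex-initial family $\Rightarrow$ $\mu(\partial^{+r}(\mathcal{A}^c))$ is close to $1$'' is stated with ``should'' and ``I expect'' and no quantitative lemma behind it. That middle implication is itself a strong stability statement for Kruskal--Katona --- which is exactly the kind of result the theorem is (the paper explicitly frames Theorem \ref{thm:main} as such a stability result) --- so as written the plan is close to circular, and it is far from clear that it can be executed with the dependence on $\epsilon$ and $\mu(\A)(1-\mu(\A))$ that the statement demands; the threshold $d$ here grows with $n$, so one cannot fall back on junta-type structure theorems with constant parameters. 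This is a genuine gap, not a detail.

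For comparison, the paper avoids the dichotomy altogether and never needs a structure/stability step. It works at a \emph{linear} Hamming distance $2j=2\lceil\eta n\rceil$: the spectral gap $\Omega(\eta)$ of the Johnson graph $J(n,n/2,j)$ (Theorem \ref{thm:spect}, via Koshelev's trick) plus Cheeger gives that a constant proportion (namely $\ge\tfrac12\eta\,\mu(\A)(1-\mu(\A))$) of distance-$(2j)$ pairs are split by $\mathcal{A}$; then a random restriction to a $D=2j$-dimensional subcube, the local LYM inequality and Harris' inequality inside that subcube produce, already within $\lceil K\sqrt D\rceil=\lceil\epsilon\sqrt n\rceil$ levels above the middle, a positive-measure intersection of the two generated up-sets, and averaging over the random subcube transfers this to $\partial^{+r}(\mathcal{A})\cap\partial^{+r}(\mathcal{A}^c)$. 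If you want to salvage your approach, the missing piece is precisely a quantitative treatment of low-degree, almost-$\{0,1\}$-smoothed families; the paper's restriction-plus-Harris device is one way to see why no such classification is needed.
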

This can be seen as a stability result for the Kruskal--Katona theorem, since it implies that if $\mathcal{A} \subset {[n] \choose n/2}$ with $\mu(\mathcal{A}) = 1/2$, then provided $n$ is sufficiently large depending on $\epsilon$, writing $r: = \lceil \epsilon \sqrt{n} \rceil$ we have
\begin{align*}
\mu(\partial^{+r}(\mathcal{A}))+\mu(\partial^{+r}(\mathcal{A}^c))& = \mu(\partial^{+r}(\mathcal{A})\cup \partial^{+r}(\mathcal{A}^c))+\mu(\partial^{+r}(\mathcal{A})\cap\partial^{+r}(\mathcal{A}^c))\\&= 1+\mu(\partial^{+r}(\mathcal{A})\cap\partial^{+r}(\mathcal{A}^c))\\
&\geq 1+\delta(\epsilon),
\end{align*}
where $\delta(\epsilon): = c\epsilon^2/\log(1/\epsilon)$ for some absolute constant $c>0$, so
$$\max\{\mu(\partial^{+r}(\mathcal{A})),\mu(\partial^{+r}(\mathcal{A}^c))\} \geq 1/2+\delta(\epsilon)/2,$$
whereas a naive application of Kruskal--Katona just yields $\mu(\partial^{+r}(\mathcal{A})) \geq 1/2+\epsilon/\sqrt{n}$ (and also, of course, $\mu(\partial^{+r}(\mathcal{A}^c)) \geq 1/2+\epsilon/\sqrt{n}$). In other words, at least one of $\mathcal{A}$ and $\mathcal{A}^c$ must have large iterated upper-shadow.

We note that the explicit lower bound on $n$ in the statement of Theorem \ref{thm:main} could be replaced by the following (more usual, but less explicit) formulation: for any $\zeta >0$ and any $\epsilon \in (0,1)$, there exists $n_0 = n_0(\zeta,\epsilon)>0$ such that if $\zeta \leq \mu(\A)\leq 1-\zeta$ and $n \geq n_0$, the conclusion of the theorem holds. 

Our proof has two main ingredients. First, we observe that if we had $r = \lceil \sqrt{n}\rceil$ instead of $r = \lceil \epsilon \sqrt{n}\rceil$, then the required lower bound on $\mu(\partial^{+r}(\mathcal{A})\cap\partial^{+r}(\mathcal{A}^c))$ would follow quickly from Harris' inequality. The second ingredient is to use a random restriction argument, and the expansion properties of a certain class of Johnson graphs, to reduce the case $r = \lceil \epsilon \sqrt{n}\rceil$ to the case $r = \lceil \sqrt{n}\rceil$. For $1 \leq k \leq n$ and $1 \leq j \leq k$, we write $J(n,k,j)$ for the graph with vertex-set ${[n] \choose k}$, where two $k$-element sets are joined by an edge of the graph $J(n,k,j)$ iff their symmetric difference has size $2j$. The expansion result we require is that for $n$ even and $1 \leq j \leq n/10$, the nonzero eigenvalues of the normalized Laplacian of $J(n,n/2,j)$ are all at least $\Omega(j/n)$ --- i.e., that the normalized spectral gap of $J(n,n/2,j)$ is $\Omega(j/n)$. This result does not seem to be stated in the (extensive) literature on Johnson graphs, though it is not very hard to prove it, with the aid of a technique recently introduced by Koshelev \cite{koshelev}.

\section{Proof of Theorem \ref{thm:main}.}

We first prove the required result on the spectral gap of various Johnson graphs.

\begin{theorem}
\label{thm:spect}
    Let $n$ be even and let $j \in \mathbb{N}$ with $\eta: = j/n \leq 1/10$. Let $\tilde{L}$ be the normalised Laplacian of the Johnson graph $H: = J(n,n/2,j)$, i.e., $\tilde{L} = L/d = I-A/d$, where $I$ is the ${n \choose n/2}$ by ${n \choose n/2}$ identity matrix, $L$ is the (unnormalised) Laplacian of $H$, $A$ is the adjacency matrix of $H$, and $d = {n/2 \choose j}^2$ is the degree of $H$ (meaning, $H$ is $d$-regular). Let $0 = \tilde{\mu}_1 < \tilde{\mu}_2 \leq \ldots \leq \tilde{\mu}_{{n \choose n/2}}$ denote the eigenvalues of $\tilde{L}$, in increasing order, counted with multiplicity. Then we have
    $$\tilde{\mu}_2 \geq \eta/2.$$
\end{theorem}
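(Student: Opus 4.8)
The plan is to compute the eigenvalues of the Johnson graph $J(n,k,j)$ explicitly and then specialize to $k=n/2$, $j \le n/10$. The eigenvalues of the adjacency matrix of $J(n,k,j)$ are indexed by $i = 0,1,\ldots,k$: the $i$-th eigenvalue is $\lambda_i = E_j(i)$, where $E_j$ is a certain "Eberlein polynomial" (the eigenvalue of the $j$-th distance graph in the Johnson scheme). Concretely, $E_j(i) = \sum_{s} (-1)^s \binom{i}{s}\binom{k-i}{j-s}\binom{n-k-i}{j-s}$, summed over the relevant range of $s$. The corresponding eigenspace (common to the whole Johnson scheme) has dimension $\binom{n}{i} - \binom{n}{i-1}$, so every eigenvalue except $i=0$ genuinely occurs. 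Thus $\tilde\mu_2 = 1 - \max_{1 \le i \le k} E_j(i)/d$ with $d = E_j(0) = \binom{k}{j}\binom{n-k}{j} = \binom{n/2}{j}^2$, and it suffices to show $E_j(i)/d \le 1 - \eta/2$ for all $i \ge 1$, where the worst case will be $i=1$.

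First I would handle $i=1$ directly: $E_j(1) = \binom{k-1}{j}\binom{n-k}{j} - \binom{k-1}{j-1}\binom{n-k-1}{j-1}$. With $k = n/2$ this is $\binom{n/2-1}{j}\binom{n/2}{j} - \binom{n/2-1}{j-1}\binom{n/2-1}{j-1}$; dividing by $d = \binom{n/2}{j}^2$ and simplifying the ratios of binomial coefficients gives $E_j(1)/d = \frac{n/2-j}{n/2} - \left(\frac{j}{n/2}\right)\cdot\frac{n/2-j}{n/2} \cdot(\text{something close to }1)$, and a short computation should yield $E_j(1)/d \le 1 - j/(n/2) + O(j^2/n^2) \le 1 - \eta$ comfortably in the range $\eta \le 1/10$, so $1 - E_j(1)/d \ge \eta \ge \eta/2$. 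The genuinely nontrivial part is bounding $E_j(i)/d$ for larger $i$, in particular showing it cannot exceed $1 - \eta/2$; for $i$ of moderate or large size one expects $E_j(i)$ to be much smaller (indeed it can be negative), but a clean uniform bound requires work.

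For the general-$i$ bound, the plan is to follow the technique of Koshelev \cite{koshelev} referenced in the introduction, rather than manipulate Eberlein polynomials by hand. The idea there is to realize the normalized adjacency operator of $J(n,k,j)$ as (a piece of) a product of two simple averaging operators — the "down" operator $\binom{[n]}{k} \to \binom{[n]}{k-j}$ and the "up" operator $\binom{[n]}{k-j} \to \binom{[n]}{k}$, or a suitable combination of up/down steps — whose spectra are classically known (the up-down operators on the Boolean lattice have eigenvalues that are explicit products of the form $\frac{(k-i)(n-k-i+\cdots)}{\cdots}$ on the degree-$i$ harmonic subspace). Writing $A/d$ as such a composition, the eigenvalue on the degree-$i$ subspace factors, and one reads off the bound $\lambda_i/d \le 1 - \eta/2$ by elementary estimates on these products, uniformly in $i$. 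The main obstacle I anticipate is getting this factorization exactly right and tracking the combinatorial identity that expresses the $j$-th Johnson adjacency matrix (symmetric-difference exactly $2j$) in terms of the composition of $j$ elementary up and $j$ elementary down steps — there are lower-order correction terms (paths that go down then back up through the same elements, corresponding to smaller symmetric difference), so one must either invert a triangular system relating $\{A_0, A_1, \ldots, A_j\}$ to the up-down compositions, or argue that these corrections only help the bound. Once the operator identity and the known spectra of the elementary operators are in hand, the remaining estimates are routine inequalities among binomial coefficients valid for $\eta = j/n \le 1/10$.
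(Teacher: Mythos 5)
Your starting point (the explicit eigenvalues of the Johnson scheme, with the $i$-th eigenvalue occurring on the harmonic subspace of dimension $\binom{n}{i}-\binom{n}{i-1}$, so that $\tilde\mu_2 = 1-\max_{1\le i\le k}\lambda(i)/d$) matches the paper, and your $i=1$ computation is essentially right (with $k=n/2$ one gets $\lambda(1)/d=(1-2\eta)^2-(2\eta)^2=1-4\eta$, so that case is comfortable, modulo your typo replacing $\binom{n-k-1}{j}$ by $\binom{n-k}{j}$). But the proposal has a genuine gap exactly where the theorem lives: the uniform bound $\lambda(i)/d\le 1-\eta/2$ for \emph{all} $i\ge 1$. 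You assert that ``the worst case will be $i=1$,'' which is unproven and is in fact the whole content of the statement, and your plan for general $i$ --- writing the distance-$j$ adjacency matrix in terms of up/down operators on the Boolean lattice --- stalls at precisely the point you flag yourself. The identity relating $U^jD^j$-type compositions to the distance matrices is triangular with coefficients $\binom{k-s}{k-j}$, so isolating $A_j$ requires inverting an alternating-sign system, and the hope that ``these corrections only help the bound'' is not justified: the signs are the entire difficulty, and nothing in the sketch controls the resulting cancellation uniformly in $i$ (for moderate $i$ and $j=\Theta(n)$ the individual terms are enormous compared to the answer).

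For comparison, the paper's proof never factorizes the operator. Writing (for $n=2k$) $\lambda(i)=\sum_{h}(-1)^h\binom{i}{h}\binom{k-i}{j-h}^2$, it applies the trick from p.~5 of Koshelev: bound the alternating sum by
$$\abs{\lambda(i)}/\binom{k}{j}^2 \le \max_{h}\binom{k-i}{j-h}\cdot\sum_{h}\binom{i}{h}\binom{k-i}{j-h}\Big/\binom{k}{j}^2 = \max_{h}\binom{k-i}{j-h}\Big/\binom{k}{j},$$
using Vandermonde, and then shows this ratio is at most $(1-\eta/2)^i$ by splitting into the cases $j>\lceil(k-i)/2\rceil$ (where one compares $2^{k-i}$ with $(k/j)^{j}$, using $\eta\le 1/10$) and $j\le\lceil(k-i)/2\rceil$ (where the maximum is $\binom{k-i}{j}/\binom{k}{j}\le((k-i)/k)^j\le e^{-2i\eta}$). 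This gives a bound decreasing geometrically in $i$, so the ``worst case $i=1$'' issue never arises. If you want to complete your write-up, the simplest fix is to replace the up/down factorization step by this triangle-inequality-plus-Vandermonde estimate; as it stands, the central step of your argument is a plan with an acknowledged unresolved obstruction rather than a proof.
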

\begin{proof}
It is well-known (see e.g.\ \cite[p. 48]{Delsarte} and \cite[Theorem 2.1]{Karloff}) that the eigenvalues of $A$ may be enumerated (not with multiplicity) as $\lambda(0),\lambda(1),\lambda(2),\ldots,\lambda(k)$, where
$$\lambda(i) = \sum_{h=0}^{\min\{i,j\}}(-1)^h {i \choose h}{k-i \choose j-h}^2,$$
and $k: = n/2$; for any real number $\lambda$, the $\lambda$-eigenspace of $A$ is
$$\bigoplus_{i:\lambda_i = \lambda} W_i,$$
where
$$W_i: = V_i \cap V_{i-1}^{\perp}$$
for all $i \geq 1$, $W_0: = V_0$ and for each $0 \leq i \leq k$, $V_i \leq \mathbb{R}[{[n] \choose n/2}]$ is the linear subspace of $\mathbb{R}[{[n] \choose n/2}]$ spanned by the {\em $i$-juntas}, i.e., functions depending on at most $i$ coordinates. For example, $V_0$ is the linear subspace of constant functions, and $V_1 = \text{Span}\{\phi_j:\ 1 \leq j \leq n\}$, where
$$\phi_j:\left\{x \in \{0,1\}^n:\ \sum_{i=1}^{n}x_i=k\right\} \to \mathbb{R};\quad x \mapsto x_j.$$
In general, for $0 \leq i \leq k$, we have $V_i = \text{Span}\{\phi_S:\ S \in {[n] \choose i}\}$, where
$$\phi_S: \left\{x \in \{0,1\}^n:\ \sum_{i=1}^{n}x_i=k\right\} \to \mathbb{R};\quad x \mapsto \prod_{j \in S} x_j$$
for each $S \in {[n] \choose i}$.

The eigenvalue of $\tilde{L}$ corresponding to $\lambda(i)$ is
$$\tilde{\mu}(i) := 1-\lambda(i)/d = 1-\lambda(i)/{k \choose j}^2.$$
For $1 \leq i \leq j$, we have
$$\tilde{\lambda}(i):=\lambda(i)/{k \choose j}^2 = \sum_{h=0}^{i}(-1)^h {i \choose h}\left({k-i \choose j-h}/{k \choose j}\right)^2.$$

It suffices to show that for all even $n$, all $i\in [n/2]$ and all $\eta \le 1/10$, we have $\tilde{\lambda}(i) \le (1-\eta/2)^i$. We use the trick on p.5 of \cite{koshelev}. 
\begin{align*}
    \abs{\lambda(i)/{k \choose j}^2} 
&= \abs{\sum_{h=0}^{i}(-1)^h {i \choose h}\left({k-i \choose j-h}/{k \choose j}\right)^2} \\
&\le  \max_{h=0}^i {k-i \choose j-h} \cdot \sum_{h=0}^{i} {i \choose h} {k-i \choose j-h}/{k \choose j}^2 \\
&= \max_{h=0}^i {k-i \choose j-h} /{k \choose j}. \\
\end{align*}
Case 1: if $\ceil{ (k-i)/2} < j\ (\le k/2)$, then $i > k - 2j = (1/2 - 2\eta) n$ and the last expression is 
\begin{align*}
\max_{h=0}^i {k-i \choose j-h} /{k \choose j} 
&\le {k-i \choose \ceil{(k-i)/2}} /{k \choose j} \\
&\le \frac{2^{k-i}}{(k/j)^j} \\
&= 2^{k-i} \cdot (2\eta)^j \\
&= 2^{-i} \cdot \left(2^{1/2} \cdot (2\eta)^\eta\right)^n \\
&\le 2^{-i} \cdot \left((2 - \eta)^{1/2 - 2\eta}\right)^n &\text{ note that here we use } \eta \le 1/10 \\
&\le 2^{-i} \cdot (2-\eta)^i \\
&= (1-\eta/2)^i.
\end{align*}
Case 2: if $j \le \ceil{(k-i)/2}$, then the expression is 
\begin{align*}
\max_{h=0}^i {k-i \choose j-h} /{k \choose j} 
&= {k-i \choose j} /{k \choose j} \\
&\le \left(\frac{k-i}{k}\right)^j\\
&= \left(\frac{n/2 - i}{n/2}\right)^{\eta n} \\
&\le \exp\left(-\frac{2i}{n}\cdot \eta n\right)  \\
&= \exp\left(-2i\eta\right) \\
&\le (1 - \eta)^i.
\end{align*}
\end{proof}

From this, and the discrete Cheeger inequality of Alon and Milman \cite{am}, we immediately deduce the expansion consequence we need.

\begin{corollary}
\label{cor:pair-count}
If $\mathcal{A} \subset {[n] \choose n/2}$, and $j$, $H$ and $d$ are defined as in Theorem \ref{thm:spect}, then we have
\begin{align*}\left|\left\{(A,B) \in {[n] \choose n/2}^2:\ |A \Delta B| = 2j,\ A \in \mathcal{A},\ B \notin \mathcal{A}\right\}\right| & = e_H(\mathcal{A},\mathcal{A}^c) \\
& \geq d\ \tilde{\mu}_2\ \mu(\mathcal{A})(1-\mu(\mathcal{A})){n \choose n/2}\\
&\geq \tfrac{1}{2}\eta\ \mu(\mathcal{A})(1-\mu(\mathcal{A}))d{n \choose n/2},
\end{align*}
and therefore
\begin{align*}\frac{|\{(A,B) \in {[n] \choose n/2}^2:\ |A \Delta B| = 2j,\ A \in \mathcal{A},\ B \notin \mathcal{A}\}|}{|\{(A,B) \in {[n] \choose n/2}^2:\ |A \Delta B| = 2j\}|} &= \frac{|\{(A,B) \in {[n] \choose n/2}^2:\ |A \Delta B| = 2j,\ A \in \mathcal{A},\ B \notin \mathcal{A}\}|}{d{n \choose n/2}} \\
& \geq \tfrac{1}{2}\eta\ \mu(\mathcal{A})(1-\mu(\mathcal{A})).
\end{align*}
\end{corollary}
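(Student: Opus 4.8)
The plan is to deduce Corollary~\ref{cor:pair-count} directly from Theorem~\ref{thm:spect} via the Alon--Milman discrete Cheeger inequality, with no further structural work needed. First I would recall the standard spectral expander mixing bound in the form convenient here: for a $d$-regular graph $H$ on vertex set $V$ with normalised Laplacian $\tilde L$ having second-smallest eigenvalue $\tilde\mu_2$, and for any $\mathcal{A}\subseteq V$, the number of ordered edges from $\mathcal{A}$ to its complement satisfies $e_H(\mathcal{A},\mathcal{A}^c)\geq d\,\tilde\mu_2\,|\mathcal{A}|\,|\mathcal{A}^c|/|V|$. Applied with $|V|={n\choose n/2}$, $|\mathcal{A}|=\mu(\mathcal{A}){n\choose n/2}$ and $|\mathcal{A}^c|=(1-\mu(\mathcal{A})){n\choose n/2}$, this gives exactly the first displayed inequality of the corollary.

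Next I would substitute the bound $\tilde\mu_2\geq\eta/2$ from Theorem~\ref{thm:spect} (valid since $\eta=j/n\leq 1/10$) to obtain the second displayed inequality. The quantity on the left-hand side is by definition the number of ordered pairs $(A,B)$ of $(n/2)$-sets with $|A\Delta B|=2j$, $A\in\mathcal{A}$, $B\notin\mathcal{A}$, because two vertices of $J(n,n/2,j)$ are adjacent precisely when their symmetric difference has size $2j$; this identifies $e_H(\mathcal{A},\mathcal{A}^c)$ with the cardinality written in the corollary. Finally, dividing through by the total number of ordered adjacent pairs, which is $d{n\choose n/2}$ since $H$ is $d$-regular on ${n\choose n/2}$ vertices, yields the claimed lower bound $\tfrac12\eta\,\mu(\mathcal{A})(1-\mu(\mathcal{A}))$ for the ratio.

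The only point requiring a word of justification is the expander mixing inequality $e_H(\mathcal{A},\mathcal{A}^c)\geq d\tilde\mu_2|\mathcal{A}||\mathcal{A}^c|/|V|$ itself. This is precisely (one direction of) the discrete Cheeger inequality as formulated by Alon and Milman \cite{am}: writing $\mathbf{1}_{\mathcal{A}}$ for the indicator of $\mathcal{A}$ and decomposing it as its mean plus a mean-zero part $g$, one has $\langle g,\tilde L g\rangle = e_H(\mathcal{A},\mathcal{A}^c)/d$ while $\tilde L\succeq \tilde\mu_2$ on the space of mean-zero functions, so $e_H(\mathcal{A},\mathcal{A}^c)/d\geq \tilde\mu_2\|g\|_2^2 = \tilde\mu_2\mu(\mathcal{A})(1-\mu(\mathcal{A}))|V|$. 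I do not anticipate any genuine obstacle: this corollary is a routine translation of the spectral gap into an edge-boundary statement, and the main content has already been established in Theorem~\ref{thm:spect}. The one thing to be careful about is bookkeeping between ordered and unordered pairs and the precise normalisation of $d={n/2\choose j}^2$, but the statement as written is consistent with the ordered-pair convention used throughout.
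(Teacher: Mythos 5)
Your proposal is correct and follows essentially the same route as the paper, which deduces the corollary immediately from Theorem \ref{thm:spect} via the easy (spectral) direction of the Alon--Milman discrete Cheeger inequality; your variational argument with $\mathbf{1}_{\mathcal{A}}$ decomposed into its mean plus a mean-zero part is exactly the standard proof of that bound, with the correct normalisations. The ordered/unordered bookkeeping you flag is also handled consistently, since each crossing edge corresponds to exactly one ordered pair $(A,B)$ with $A\in\mathcal{A}$, $B\notin\mathcal{A}$, while the denominator $d{n\choose n/2}$ counts all ordered distance-$(2j)$ pairs.
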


We are now in a position to prove Theorem \ref{thm:main}. 
    \begin{proof}[Proof of Theorem \ref{thm:main}.]
Let $n$ be even, let $\mathcal{A} \subset {[n] \choose n/2}$ with $A \neq \emptyset$ and $\A \neq {[n] \choose n/2}$, and let $j \in \mathbb{N}$ be defined as in Theorem \ref{thm:spect}, where $\eta$ is to be chosen later. 

Define the set of {\em good pairs} to be
$$\left\{(A,B) \in {[n] \choose n/2}^2:\ |A \Delta B| = 2j,\ A \in \mathcal{A},\ B \notin \mathcal{A}\right\},$$
and the set of {\em distance-$(2j)$ pairs} to be
$$\left\{(A,B) \in {[n] \choose n/2}^2:\ |A \Delta B| = 2j\right\}$$
note that these are precisely the pairs counted in Corollary \ref{cor:pair-count}, which says precisely that if a distance-$(2j)$ pair is chosen uniformly at random, it is good with probability 
$$\mathfrak{q} \geq \tfrac{1}{2} \eta \mu(\A)(1-\mu(\A)).$$

Let $D = D(n)$ be an even integer to be chosen later, with $D \geq 2j$, and let $\mathcal{C}$ be a $D$-dimensional subcube of the $n$-cube whose top point is a single set of size $n/2+D/2$ and whose bottom point is a single set of size $n/2-D/2$; note that the middle layer of $\mathcal{C}$ is a subset of the middle layer ${[n] \choose n/2}$ of the whole $n$-cube. Let $\mathcal{M}(\mathcal{C}) = \mathcal{C} \cap {[n] \choose n/2}$ denote the middle layer of $\mathcal{C}$; note that $|\mathcal{M}(\mathcal{C})| = {D \choose D/2}$. Define
$$\alpha_\mathcal{C} : = |\mathcal{A} \cap \mathcal{M}(\mathcal{C})|/|\mathcal{M}(\mathcal{C})|$$
to be the measure of the intersection of $\mathcal{A}$ with $\mathcal{M}(\mathcal{C})$, the middle-layer of $\mathcal{C}$. Let $\gamma_\C$ denote the proportion of distance-$(2j)$ pairs in $\M(\C)$ that are good. Observe that, if a distance-$(2j)$ pair is chosen uniformly at random from $\M(\C)$, then the first element of the pair is simply a uniform random element of $\M(\C)$; if the pair is good, its first element must be in $\mathcal{A}$, so the probability that the pair is good (which is $\gamma_{\C}$) is at most the probability that its first element is in $\mathcal{A}$ (which is $\alpha_{\C}$). Hence, $\gamma_\C \leq \alpha_\C$. The same argument applied to the second element of the pair, yields $\gamma_\C \leq 1-\alpha_\C$. Therefore,
$$\alpha_\C(1-\alpha_\C) \geq \tfrac{1}{2}\min\{\alpha_\C,1-\alpha_\C\} \geq \tfrac{1}{2}\gamma_\C.$$
Let $\mathcal{U}_{\mathcal{C}}(\mathcal{A} \cap \mathcal{C})$ denote the up-set in $\mathcal{C}$ generated by $\mathcal{A} \cap \mathcal{C}$ --- i.e.,
$$\mathcal{U}_{\mathcal{C}}(\mathcal{A} \cap \mathcal{C}) = \{X \in \C:\ X \supset A \text{ for some }A \in \A \cap \C\} \subset \mathcal{C}.$$
Similarly, we let
$\mathcal{U}_{\mathcal{C}}(\mathcal{A}^c \cap \mathcal{C})$ denote the up-set in $\mathcal{C}$ generated by $\mathcal{A}^c \cap \mathcal{C}$ --- i.e.,
$$\mathcal{U}_{\mathcal{C}}(\mathcal{A}^c \cap \mathcal{C}) = \{X \in \C:\ X \supset B \text{ for some }B \in \A^c \cap \C\} \subset \mathcal{C}.$$
Let $\mu_\C$ denote the uniform measure on the subcube $\C$, which is a discrete cube of dimension $D$. By the local LYM inequality applied within $\C$, the set $\mathcal{U}_{\mathcal{C}}(\mathcal{A} \cap \mathcal{C})$ occupies at least an $\alpha_\C$ proportion of every layer of $\C$ from its middle layer and upwards, and therefore
$$\mu_\C(\mathcal{U}_{\mathcal{C}}(\mathcal{A} \cap \mathcal{C})) \geq \tfrac{1}{2}\alpha_\C.$$
Similarly,
$$\mu_\C(\mathcal{U}_{\mathcal{C}}(\mathcal{A}^c \cap \mathcal{C})) \geq \tfrac{1}{2}(1-\alpha_\C).$$
By Harris' inequality \cite{harris}, applied to the up-sets $\mathcal{U}_{\mathcal{C}}(\mathcal{A} \cap \mathcal{C})$ and $\mathcal{U}_{\mathcal{C}}(\mathcal{A}^c \cap \mathcal{C})$ within $\C$, we have
$$\mu_\C(\mathcal{U}_{\mathcal{C}}(\mathcal{A} \cap \mathcal{C}) \cap \mathcal{U}_{\mathcal{C}}(\mathcal{A}^c \cap \mathcal{C})) \geq \mu_\C(\mathcal{U}_{\mathcal{C}}(\mathcal{A} \cap \mathcal{C}))\mu_\C(\mathcal{U}_{\mathcal{C}}(\mathcal{A}^c \cap \mathcal{C})) \geq \tfrac{1}{4}\alpha_\C(1-\alpha_\C) \geq \tfrac{1}{8}\gamma_\C.$$
Consider the set
$$\G_\C: = \{X \in \mathcal{U}_{\mathcal{C}}(\mathcal{A} \cap \mathcal{C}) \cap \mathcal{U}_{\mathcal{C}}(\mathcal{A}^c \cap \mathcal{C}):\ |X| \leq n/2+\lceil K\sqrt{D}\rceil\}.$$
By a Chernoff bound, we have $\mu_\C(\{X \in \C: |X| > n/2+\lceil K\sqrt{D} \rceil\}) = \Pr[\Bin(D,1/2) > D/2+\lceil K\sqrt{D} \rceil] < \exp(-2K^2/3)$, and therefore
$$\mu_\C(\G_\C) \geq \mu_\C(\mathcal{U}_{\mathcal{C}}(\mathcal{A} \cap \mathcal{C}) \cap \mathcal{U}_{\mathcal{C}}(\mathcal{A}^c \cap \mathcal{C})) - \exp(-2K^2/3) \geq \tfrac{1}{8}\gamma_\C - \exp(-2K^2/3).$$
Let $\ell = D/2+\lceil K\sqrt{D}\rceil$ and let $\mu_{\C_\ell}$ denote the uniform measure on the $\ell$th layer $\C_\ell$ of $\C$, which is a subset of the $n/2+\lceil K\sqrt{D}\rceil$ layer of the whole $n$-cube. Since $\mathcal{U}_{\mathcal{C}}(\mathcal{A} \cap \mathcal{C}) \cap \mathcal{U}_{\mathcal{C}}(\mathcal{A}^c \cap \mathcal{C})$ is an up-set within $\C$, we have
$$\mu_{\C_\ell}(\G_\C \cap \C_\ell) \geq \tfrac{1}{8}\gamma_\C - \exp(-2K^2/3).$$
Now choose $\C$ uniformly at random (over all $D$-dimensional subcubes of the $n$-cube whose top point is a set of size $n/2+D/2$ and whose bottom point is a single set of size $n/2-D/2$). Let $r = \lceil K\sqrt{D}\rceil$. A uniform random element of the $n/2+r$ layer of the $n$-cube is precisely a uniform random element of the $\ell$th layer of a uniform random $\C$, and since $\partial^{+r}(\mathcal{A})\cap\partial^{+r}(\mathcal{A}^c) \supset \G_\C \cap \C_\ell$ for each $\C$, we have
$$\mu(\partial^{+r}(\mathcal{A})\cap\partial^{+r}(\mathcal{A}^c)) \geq \mathbb{E}_\C[\mu_{\C_\ell}(\G_\C \cap \C_\ell)] \geq \tfrac{1}{8}\mathbb{E}_\C[\gamma_\C] - \exp(-2K^2/3).$$
Since a uniform random distance-$(2j)$-pair within ${[n] \choose n/2}$ is precisely a uniform random distance-$(2j)$-pair within $\C$ for a uniform random $\C$, we have
$$\mathbb{E}_\C[\gamma_\C] = \mathfrak{q} \geq \tfrac{1}{2}\eta \mu(\A)(1-\mu(\A)),$$
and therefore
$$\mu(\partial^{+r}(\mathcal{A})\cap\partial^{+r}(\mathcal{A}^c)) \geq \tfrac{1}{16}\eta \mu(\A)(1-\mu(\A)) - \exp(-2K^2/3).$$
We must now choose $K$ and $D$ such that $K\sqrt{D} = \epsilon \sqrt{n}$; we also pick $D = 2j = 2\lceil \eta n \rceil$. This forces $2K^2 = \epsilon^2 n /\lceil \eta n \rceil \geq \epsilon^2/(2\eta)$ if $\eta n \geq 1/2$ (which we shall guarantee). To optimize (up to constant factors) our lower bound, we pick $\eta$ to satisfy 
$$\tfrac{1}{32} \eta \mu(\A) (1-\mu(\A)) = \exp(-\epsilon^2/(3\eta)),$$
which also guarantees that $2\eta \leq 1/10$ (and therefore that $\lceil \eta n \rceil /n \leq 2\eta \leq 1/10$). With this choice of $\eta$, we have $$\eta = \Omega\left(\frac{\epsilon^2}{\log \left(\frac{1}{\epsilon \mu(\A)(1-\mu(\A))}\right)}\right)$$ and therefore, provided
$$\frac{c\epsilon^2}{\log \left(\frac{1}{\epsilon \mu(\A)(1-\mu(\A))}\right)}\geq \frac{1}{2n}$$
for an appropriate absolute constant $c>0$, we obtain
$$\mu(\partial^{+r}(\mathcal{A})\cap\partial^{+r}(\mathcal{A}^c)) = \Omega\left(\frac{\epsilon^2}{\log 
\left(\frac{1}{\epsilon \mu(\A)(1-\mu(\A))}\right)}\right)\mu(\A)(1-\mu(\A)),$$
proving the theorem.
\end{proof}

\section{Conclusion and open problems}
We believe that, for half-sized sets, the example $\mathcal{A} = \{A \in {[n] \choose n/2}:\ |A \cap [n/2]| > n/4\}$ mentioned above is extremal for Friedgut's problem, and therefore we make the following conjecture.
\begin{conjecture}
\label{conj:main}
  If $n$ is even and $\mathcal{A} \subset {[n] \choose n/2}$ with $\mu(\mathcal{A}) = 1/2$ then for all $\epsilon>0$, if $r = \lceil \epsilon \sqrt{n} \rceil$ then
    $$\mu(\partial^{+r}(\mathcal{A})\cap\partial^{+r}(\mathcal{A}^c)) = \Omega(\epsilon).$$  
\end{conjecture}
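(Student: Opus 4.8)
The plan is to reduce the conjecture to a sharp one-sided stability form of Kruskal--Katona, and then to prove that form by interpolating --- via a ``doubling'' (concavity) estimate --- between the scale $r=\lceil\epsilon\sqrt n\rceil$ and the scale $\sqrt n$, at which the bound $\Omega(1)$ is already available. For the reduction, note that every $B\in\binom{[n]}{n/2+r}$ contains an $n/2$-element set, which lies in $\A$ or in $\A^c$, so $\partial^{+r}(\A)\cup\partial^{+r}(\A^c)=\binom{[n]}{n/2+r}$ \emph{always}; hence
$$\mu\big(\partial^{+r}(\A)\cap\partial^{+r}(\A^c)\big)=\mu\big(\partial^{+r}(\A)\big)+\mu\big(\partial^{+r}(\A^c)\big)-1.$$
Since the iterated local LYM inequality gives $\mu(\partial^{+r}(\A)),\mu(\partial^{+r}(\A^c))\ge\mu(\A)=1/2$, it suffices to prove that \emph{at least one} of $\partial^{+r}(\A),\partial^{+r}(\A^c)$ has measure $\ge 1/2+\Omega(\epsilon)$ --- i.e.\ one cannot have the iterated upper shadows of \emph{both} $\A$ and $\A^c$ essentially as small as the Kruskal--Katona minimum.

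Write $\Phi_t:=\mu(\partial^{+t}(\A)\cap\partial^{+t}(\A^c))$ for $0\le t\le n/2$, so $\Phi_0=0$ and we want $\Phi_r=\Omega(\epsilon)$. Two ingredients come for free. First, $\Phi_t$ is non-decreasing in $t$: since $\partial^{+}(X\cap Y)\subseteq\partial^{+}(X)\cap\partial^{+}(Y)$, applying this together with local LYM to $X=\partial^{+t}(\A)$ and $Y=\partial^{+t}(\A^c)$ gives $\Phi_{t+1}\ge\mu(\partial^{+}(X\cap Y))\ge\mu(X\cap Y)=\Phi_t$. Second, $\Phi_{\lceil\sqrt n\rceil}=\Omega(1)$: this follows quickly from Harris's inequality, as remarked in the introduction (or from Theorem~\ref{thm:main} with $\epsilon$ a small absolute constant, together with the monotonicity just noted). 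The plan is to bridge the two scales: it would suffice to prove the doubling estimate
$$\Phi_{2t}\le 2\,\Phi_t\qquad\text{for all }t\ge 1\text{ with }2t\le n/2,$$
because iterating it (with routine care at the endpoints) from $t\approx\sqrt n$ downwards by powers of two gives $\Phi_{2^{-k}\sqrt n}\ge 2^{-k}\,\Phi_{\lceil\sqrt n\rceil}=\Omega(2^{-k})$, and taking $k=\lceil\log_2(1/\epsilon)\rceil$ and using monotonicity of $\Phi_t$ yields $\Phi_r=\Omega(\epsilon)$. (Even a weaker bound $\Phi_{2t}\le C\Phi_t$ for an absolute constant $C$ would give the conjecture with exponent $\log_2 C$, improving on the present $\Omega(\epsilon^2/\log(1/\epsilon))$ as soon as $C<4$.) In turn, $\Phi_{2t}\le 2\Phi_t$ is implied by \emph{concavity} of the sequence $t\mapsto\Phi_t$, which in the relevant range holds with equality (linear growth) for the conjectured extremal family $\{A:|A\cap[n/2]|>n/4\}$ and is easily checked in the other standard examples.

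The main obstacle is proving this concavity (or doubling) estimate. Since $\Phi_{t+1}-\Phi_t$ equals the sum of the local-LYM excesses $\mu(\partial^{+(t+1)}(\A))-\mu(\partial^{+t}(\A))$ and $\mu(\partial^{+(t+1)}(\A^c))-\mu(\partial^{+t}(\A^c))$, concavity of $\Phi_t$ says precisely that these excesses do not grow under taking one more upper shadow --- i.e.\ that the Kruskal--Katona defect is non-increasing along the iterated-shadow sequences. The difficulty is that the local-LYM excess of a family near level $n/2$ is extremely sensitive to structure: it is $\Theta(1/n)$ for a dictatorship but $\Theta(1)$ for the complement of a dictatorship, while the families that (nearly) minimise the iterated upper shadow at this scale --- narrow-threshold two-block majorities, and more general coordinate-symmetric ``half-space''-type families --- are not close to any small junta, so there is no clean structural dichotomy of the kind that drove Theorem~\ref{thm:main}. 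This is also why the purely spectral/Harris machinery of the present paper seems intrinsically stuck at $\Omega(\epsilon^2/\log(1/\epsilon))$: the normalised spectral gap of $J(n,n/2,j)$ is only $\Theta(j/n)$, forcing $j=O(\epsilon^2 n/\log(1/\epsilon))$ in the random-restriction step. A complete proof therefore appears to require a genuinely new robust Kruskal--Katona input: either a direct proof of the concavity of $\Phi_t$, or a sharp lower bound on the growth rate of $\mu(\partial^{+t}(\A))$ in terms of a suitable anti-concentration (``spread'') parameter of $\A$, with the two-block majority as the unique extremiser. Establishing such a statement is, I expect, the crux of the problem.
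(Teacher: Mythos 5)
Your submission is a proof sketch, not a proof, and the gap you yourself flag is the entire content of the problem. Note first that this statement is Conjecture~\ref{conj:main}, which the paper leaves open --- the authors explicitly say that their random-restriction method is intrinsically stuck at $\Omega(\epsilon^2/\log(1/\epsilon))$ and that new techniques appear to be needed --- so there is no paper proof to compare against. The parts of your argument that are actually carried out are correct and tidy: the identity $\partial^{+r}(\A)\cup\partial^{+r}(\A^c)=\binom{[n]}{n/2+r}$ (also used implicitly in the paper's introduction), hence $\Phi_r=\mu(\partial^{+r}\A)+\mu(\partial^{+r}\A^c)-1$; the monotonicity $\Phi_{t+1}\ge\Phi_t$ via $\partial^{+}(X\cap Y)\subseteq\partial^{+}(X)\cap\partial^{+}(Y)$ and local LYM above the middle layer; and the availability of $\Phi_{\lceil\sqrt n\rceil}=\Omega(1)$ from Theorem~\ref{thm:main} (with $\epsilon$ an absolute constant) or the Harris argument. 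Your bookkeeping that concavity of $t\mapsto\Phi_t$ with $\Phi_0=0$ gives $\Phi_{2t}\le 2\Phi_t$, and that iterating any bound $\Phi_{2t}\le C\Phi_t$ with $C<4$ would already improve on the paper, is also correct.

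The genuine gap is the doubling/concavity estimate itself: you give no proof, no proof strategy beyond naming it, and no evidence for it beyond checking two examples (the two-block majority and the dictatorship). Since doubling plus the ingredients you already have would immediately yield the full conjecture, this lemma cannot be expected to be routine --- it is equivalent in difficulty to the open problem, and it is not even clear it is true: the increment $\Phi_{t+1}-\Phi_t$ is the sum of the shadow-growth increments of $\A$ and $\A^c$, and there is no known mechanism forcing these increments to be non-increasing in $t$ for arbitrary families; a family whose iterated shadows stay near the Kruskal--Katona minimum for a while and then accelerate (e.g.\ because the restriction to higher levels becomes progressively more ``complement-of-junta-like'') would violate concavity, and ruling this out is precisely the kind of robust Kruskal--Katona stability statement that is missing. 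So the proposal should be read as a (reasonable) reduction of the conjecture to an unproven, possibly false, and at any rate equally hard statement, not as a proof.
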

Our random restriction argument above necessarily creates a quadratic dependence upon $\epsilon$. It seems that new techniques will be required to prove Conjecture \ref{conj:main}, if it is true.

\subsubsection*{Acknowledgements}
We are very grateful to Ehud Friedgut for posing the problem considered here, and to Dor Minzer for pointing us to known results on the spectral gap of Johnson graphs.


\begin{thebibliography}{99}
\bibitem{am} N. Alon and V. D. Milman. $\lambda_1$, isoperimetric inequalities for graphs, and superconcentrators. {\em J. Combin. Theory, Ser. B} 38 (1985), 73--88.
\bibitem{Delsarte} P. Delsarte. An algebraic approach to the association schemes of coding
theory. {\em Philips Res. Rep. Suppl.} 10 (1973), vi. 97 pp.
\bibitem{harris} T. E. Harris. A lower bound for the critical probability in a certain percolation process. {\em Math. Proc. Cam. Phil. Soc.} 56 (1960), 13--20.
\bibitem{katona} G. O. H. Katona. A theorem of finite sets. In P. Erd\H{o}s and G. O. H. Katona (Eds), {\em Theory of Graphs} (Proc. Colloq., Tihany, 1966), pp.\ 187--207.
Academic Press, New York, 1968.
\bibitem{koshelev} M. Koshelev, Spectrum of Johnson graphs. {\em Discrete Math.}  346 (2023), 113262.
\bibitem{Karloff} H. Karloff. How good is the Goemans-Williamson max cut algorithm? {\em SIAM J. Comput.} 20 (1999), 336–-350.
\bibitem{kruskal} J. B. Kruskal. The number of simplices in a complex. In R.E. Bellman (Ed), {\em Mathematical Optimization Techniques}, pp.\ 251--278. University of California Press, Berkeley, 1963.
\end{thebibliography}
\end{document}